\documentclass{amsart}
\usepackage{amsfonts,amssymb,amscd,amsmath,enumerate,verbatim,calc}
\usepackage[all]{xy}

\newcommand{\CM}{Cohen-Macaulay}

\newcommand{\wrt}{with respect to}

\newcommand{\I}{\mathbb{I} }
\newcommand{\n}{\mathfrak{n} }
\newcommand{\m}{\mathfrak{m} }

\newcommand{\C}{\mathcal{C} }

\newcommand{\G}{\mathbb{G} }
\newcommand{\rt}{\rightarrow}

\newcommand{\wh}{\widehat }

\newcommand{\image}{\operatorname{image}}
\newcommand{\rank}{\operatorname{rank}}

\newcommand{\Min}{\operatorname{Min}}

\newcommand{\depth}{\operatorname{depth}}
\newcommand{\type}{\operatorname{type}}

\newcommand{\injdim}{\operatorname{injdim}}

\newcommand{\Ass}{\operatorname{Ass}}
\newcommand{\Assh}{\operatorname{Assh}}

\newcommand{\Syz}{\operatorname{Syz}}

\newcommand{\height}{\operatorname{height}}
\newcommand{\Hom}{\operatorname{Hom}}
\newcommand{\Ext}{\operatorname{Ext}}

\theoremstyle{plain}

\newtheorem{theorem}{Theorem}[section]
\newtheorem{corollary}[theorem]{Corollary}
\newtheorem{lemma}[theorem]{Lemma}
\newtheorem{proposition}[theorem]{Proposition}

\newtheorem{conjecture}[theorem]{Conjecture}

\theoremstyle{definition}

\newtheorem{s}[theorem]{}

\newtheorem{example}[theorem]{Example}

\newtheorem{construction}[theorem]{Construction}

\theoremstyle{remark}

\begin{document}

\title{On generalization of two results of Foxby}
\author{Tony~J.~Puthenpurakal}
\date{\today}
\address{Department of Mathematics, Indian Institute of Technology Bombay, Powai, Mumbai 400 076, India}

\email{tputhen@gmail.com}
\subjclass{Primary 13D22; Secondary 13D45, 13A30}
\keywords{ Homological methods, Intersection theorem, Bass's conjecture, Vasconcelos's conjecture, faithful modules }

 \begin{abstract}
Let $(A,\m)$ be a Noetherian local ring of dimension $d$ and let $M$ be a finitely generated $A$-module. Assume $M$ has rank $r > 0$. We show that if $M$ is NOT \CM \ then $\mu_d(\m, M) > r$. If further $A$ is unmixed  and $\mu_n(\m, M) \leq 1$ for some $n \geq d$ then we prove $\injdim M < \infty$ and $A$ is \CM.
\end{abstract}
 \maketitle
\section{introduction}
Our paper is motivated by Foxby's  beautiful paper, \cite{F}.
Let $(A,\m)$ be a Noetherian local ring of dimension $d$ and let $M$ be finitely generated $A$-module. Let $P$ be a prime ideal of $A$ and let $\kappa(P)$ be the residue field of $A_P$. The number
$\mu_i(P, M) = \dim_{\kappa(P)} \Ext^i_{A_P}(\kappa(P), M_P)$ is called the $i^{th}$ Bass number of $M$ \wrt \  $P$.

If $A$ is Gorenstein then $\mu_d(\m, A) = 1$. Vasconcelos conjectured the converse, i.e., if $\mu_d(\m, A) = 1$ then $A$ is Gorenstein. This conjecture was proved by Foxby when $A$ contains a field (see  \cite[3.7]{F} and \cite[9.6.3]{BH}) and by Roberts in general, see \cite{R}. If $A$ is a \CM \ and if $M$ is a faithful maximal \CM  \ $A$-module with $\mu_d(\m, M) =1$ then $M$ is the canonical module of $A$, so in particular $\injdim_A M = d$.

 Foxby also essentially \cite[Conjecture B]{F} made the following:
\begin{conjecture}
  \label{my-conj}
  Let $(A,\m)$ be a Noetherian local ring of dimension $d$ and let $M$ be a finitely generated $A$-module. If $M$ is a faithful $A$-module and if $\mu_d(\m, M) = 1$ then $A$ is \CM \ and $M$ is a canonical module of $A$.
\end{conjecture}
Foxby showed the conjecture is true when $A$ contains a field or when $A$ is unmixed or when $\depth M \geq \dim A - 2$. We give a considerably simpler general proof of Conjecture \ref{my-conj}. Techniques used for our proof also proves the following result:
\begin{theorem}
\label{rank-non-cm}
 Let $(A,\m)$ be a Noetherian local ring of dimension $d$ and let $M$ be a finitely generated $A$-module. Assume $M$ has a positive rank $r$.
 Then $\mu_d(\m, M) \geq r$.  Furthermore the following assertions are equivalent:
 \begin{enumerate}[\rm (i)]
   \item $\mu_d(\m, M) = r$
   \item $A$ is \CM \ and $A_P$ is Gorenstein for all $P \in \Min(A)$. Furthermore $M$ is maximal \CM, $\injdim M < \infty$  and $\wh{M} \cong \omega_{\wh{A}}^r$.
 \end{enumerate}
\end{theorem}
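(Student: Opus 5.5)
The plan is to reduce to the complete case and then read off $\mu_d(\m,M)$ from a dualizing complex. Bass numbers are unchanged under $\m$-adic completion, and $\wh{M}$ again has rank $r$ over $\wh{A}$ because $A\rt\wh{A}$ is flat. Conversely, each condition in (ii) descends from $\wh{A}$ to $A$:
\begin{itemize}
\item Cohen--Macaulayness of $A$ and $M$ descends, since depth and dimension are preserved.
\item Finiteness of injective dimension descends, since it is detected by the vanishing of Bass numbers at $\m$.
\item Gorensteinness of $A_P$ for $P \in \Min(A)$ descends: such a $P$ is the contraction of some $Q\in\Min(\wh{A})$, and $A_P \rt \wh{A}_Q$ is flat local.
\end{itemize}
So we may assume $A$ is complete. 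Then $A$ has a normalized dualizing complex $D$ with $H^{-d}(D)=\omega_A$. Local duality gives $RHom_A(k,M)^\vee \simeq k\otimes^{L}_A RHom_A(M,D)$. The complex $X=RHom_A(M,D)$ has cohomology only in degrees $-d,\dots,0$, and $H^{-d}(X)=\Hom_A(M,\omega_A)=:K$. Taking the lowest cohomology of $k\otimes^L X$ therefore yields the key formula
$$\mu_d(\m,M)=\dim_k K/\m K=\mu(K).$$

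Next, choose $P\in\Min(A)$ with $\dim A/P=d$; such primes are associated, so $M_P\cong A_P^r$. Then $K_P\cong \omega_{A_P}^r$, and hence
$$\mu(K)\geq \mu_{A_P}(K_P)=r\cdot \type(A_P)\geq r.$$
This proves the inequality. Moreover, equality forces $\type(A_P)=1$, i.e.\ $A_P$ is Gorenstein for every $P\in\Assh(A)$, and it forces $K$ to be generated by $r$ elements. Thus we get a surjection $A^r\rt K$ whose kernel $N$ satisfies $N_P=0$ for all $P\in\Assh A$.

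The converse (ii)$\Rightarrow$(i) is routine. If $A$ is CM, $M$ is maximal CM and $\wh{M}\cong\omega_{\wh{A}}^r$, then $K\cong \Hom(\omega^r,\omega)\cong A^r$. Hence $\mu(K)=r$, and the formula gives $\mu_d(\m,M)=r$.

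The main obstacle is the implication (i)$\Rightarrow$(ii), namely upgrading ``$K$ is $r$-generated'' to Cohen--Macaulayness of $A$ and $M$ together with $M\cong\omega^r$. I would first establish that $N=0$, so that $K\cong A^r$. Here I would use that $K=\Hom(M,\omega)$ satisfies Serre's $S_2$ on its support and has $\Ass K\subseteq\Assh A$. I would also use the existence of a rank at every associated prime of $A$, to show that every $Q\in\Ass A$ lies in $\Assh A$; once $A$ is unmixed, a map $A^r\rt K$ that is an isomorphism at the minimal primes and has torsion-free target is injective. Given $K\cong A^r$, the canonical module $\omega_A=\Hom(A,\omega)$ is a direct summand of $\Hom(K,\omega)$, which is in turn tied to $M$ via the natural map $M\rt\Hom(\Hom(M,\omega),\omega)$.

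To get the CM property I would then use the full complex $k\otimes^L X$ rather than only its bottom degree. For equality $\mu_d=r$ the spectral sequence must not contribute further to degree $-d$. By Nakayama and induction on $\dim$ (localizing at primes in the punctured spectrum, where the same equality persists by the rank hypothesis), I expect to show $H^{-i}(X)=\Ext^{d-i}(M,\omega)=0$ for $i<d$. Then $X\simeq K[d]\cong A^r[d]$, which dualizes to $M\simeq RHom(A^r[d],D)=D^r[-d]$. This forces $D$ to be concentrated in a single degree, so $A$ is CM with $M\cong\omega_A^r$, and $\injdim M<\infty$ follows. The vanishing of these intermediate Ext's is the step I expect to require the most care.
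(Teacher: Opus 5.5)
Your key formula $\mu_d(\m,M)=\nu(\Hom_A(M,\omega_A))$ is false. Here $\nu$ is the minimal number of generators, which you write $\mu(K)$. Both your inequality and your analysis of equality rest on this formula. The complex $k\otimes^{\mathbf L}_A X$ is not bounded below by $-d$: since $H^{-i}(k\otimes^{\mathbf L}_A X)\cong \Ext^i_A(k,M)^\vee$, it is nonzero in degrees $-i$ for all $i>\depth M$ whenever $\injdim M=\infty$. The degree computed by a single cohomology module of $X$ is the top one, and it gives $\mu_{\depth M}(\m,M)=\nu(H^{\depth M}_\m(M)^\vee)$. In degree $-d$ there are further $E_2$-terms $\operatorname{Tor}^A_j(k,H^{-d+j}(X))$, $1\le j\le d$, with $H^{-d+j}(X)\cong H^{d-j}_\m(M)^\vee$. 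These vanish only when $M$ is maximal \CM.

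Concrete failures:
\begin{enumerate}
\item Over a regular local ring, Koszul self-duality gives $\mu_d(\m,N)=\nu(N)$. So for $A=k[[x,y]]$ and $M=\m$ (rank $1$) one has $\mu_2(\m,\m)=2$, while $\Hom_A(\m,A)\cong A$ is cyclic.
\item For $A=k[[x,y]]/(x^2,xy)$ and $M=A$, your formula gives $\mu_1(\m,A)=\nu(\omega_A)=\nu(k[[y]])=1$. That is condition (i) for a non-\CM\ ring. In fact the term $\operatorname{Tor}^A_1(k,H^0_\m(A)^\vee)=\operatorname{Tor}^A_1(k,k)$ forces $\mu_1(\m,A)\ge 2$.
\item There is no inequality to fall back on either. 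Over $A=k[[x,y,z]]$, let $M$ be the cokernel of $A\rt A^6$ given by the six quadratic monomials. Then $\mu_3(\m,M)=\nu(M)=6$, but $\Hom_A(M,A)\cong\Syz_2(A/\m^2)$ needs $8$ generators.
\end{enumerate}
So you have no proof of $\mu_d(\m,M)\ge r$, nor of $\type(A_P)=1$ under (i). The missing ingredient is Foxby's Lemma~\ref{bella}. Iterated along a saturated chain from $P\in\Assh(A)$ to $\m$, it gives $r\type(A_P)=\mu_0(P,M)\le \mu_d(\m,M)$, which is how the paper proves both.

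The direction (i)$\Rightarrow$(ii) has further gaps even if you grant that $K$ is $r$-generated.
\begin{enumerate}
\item A rank does not give $\Ass A\subseteq\Assh(A)$: $M=A$ has rank $1$ over any ring. In the paper, unmixedness of $A$ is a consequence obtained only after $M$ is known to be maximal \CM, from $\Ass M\subseteq\Assh(A)$ and $\Ass A\subseteq \Ass M$.
\item The decisive vanishing of $\Ext^{d-i}_A(M,\omega_A)$ for $i<d$ is only ``expected'', and your localization idea does not deliver it. Via Lemma~\ref{bella}, the equality $\mu_{\height P}(P,M)=r$ transfers only to primes with $\height P+\dim A/P=d$. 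Control on the punctured spectrum only makes $H^i_\m(M)$, $i<d$, of finite length, not zero.
\end{enumerate}
For $M=A$ the statement contains Vasconcelos's conjecture, so Nakayama and induction cannot be expected to suffice. The paper gets the maximal \CM\ property from the new intersection theorem, as follows. It dualizes the $\m$-torsion of the minimal injective resolution (Construction~\ref{b-const}). Then $r_d=r$ and Theorem~\ref{rachel} give $e_d(\m,B^\vee)=e_d(\m,U^\vee)=0$. Corollary~\ref{bijlee} then forces $\sum_{i=0}^{d-1}(-1)^i\mu_{d-1-i}(\m,M)=0$. Theorem~\ref{bass}, which rests on Theorem~\ref{int}, then shows $M$ is maximal \CM. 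Unmixedness kills $U^\vee$, so $\injdim M<\infty$, and Theorem~\ref{bc} makes $A$ \CM.

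The following parts of your plan are fine: the reduction to the complete case; the implication (ii)$\Rightarrow$(i), where $M$ is maximal \CM\ and your formula does hold; and the final step from $X\simeq A^r[d]$ to $A$ \CM\ with $M\cong\omega_A^r$.
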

In the above theorem $\omega_{\wh{A}}$ denotes the canonical module of the completion $\wh{A}$ of $A$.

Recall a Noetherian local ring $A$ is said to be unmixed if $\dim \wh{A}/Q = \dim \wh{A}$ for all associate primes  $Q$ of the completion $\wh{A}$ of $A$.
Foxby also proved the following result, see \cite[4.3]{F}:
\begin{theorem}\label{fm}
Let $(A,\m)$ be a Noetherian local ring of dimension $d$ and assume $\Min \wh{A} = \Ass \wh{A}$. If $\mu_n(\m, A) \leq 1$ for some $n \geq d$ then $A$ is a Gorenstein ring.
\end{theorem}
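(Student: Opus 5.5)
The plan is to reduce to the case $n=d$, which is Theorem \ref{rank-non-cm} applied to $M=A$, and to treat $n>d$ by a separate argument that forces finite injective dimension. Write the argument with a general $M$ of rank $r$ where convenient, then specialize to $M=A$, $r=1$.

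\textbf{Reduction to the complete case.} Bass numbers are unchanged under completion: $\mu_i(\m,A)=\mu_i(\wh{\m},\wh{A})$. Also, $A$ is Gorenstein if and only if $\wh{A}$ is, and the hypothesis $\Min\wh{A}=\Ass\wh{A}$ is stated for $\wh{A}$. So we may assume $A$ is complete with $\Ass A=\Min A$. In particular $A$, as a module over itself, has rank $1$ in the sense of Theorem \ref{rank-non-cm}. Complete means $A$ has a canonical module, which is also what lets us pass between $\omega_A$ and $\omega_{\wh{A}}$.

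\textbf{Case $n=d$.} By Theorem \ref{rank-non-cm}, $\mu_d(\m,A)\geq 1$, so the hypothesis gives $\mu_d(\m,A)=1=r$. The equivalence (i)$\Leftrightarrow$(ii) then says $A$ is \CM\ and $\wh{A}\cong\omega_{\wh{A}}$, so $A$ is Gorenstein. (Alternatively, $\injdim_A A<\infty$ already forces $A$ Gorenstein.)

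\textbf{Case $n>d$.} There are two subcases.
\begin{enumerate}[\rm (a)]
\item If $\mu_n(\m,A)=0$, use the standard fact (Roberts, Foxby) that Bass numbers $\mu_i(\m,A)$ do not vanish for $\depth A\leq i\leq\injdim A$. Since $n>d\geq\depth A$, vanishing at $n$ forces $\injdim A<n<\infty$, and by Bass's theorem $A$ is \CM, hence Gorenstein.
\item If $\mu_n(\m,A)=1$, choose $P\in\Min A=\Ass A$ with $\dim A/P=d$; here the unmixed hypothesis guarantees every minimal prime has $\dim A/P=d$. The Artinian ring $A_P$ has $\mu_{n-d}(P,A)\geq 1$ unless $\injdim A_P<n-d$. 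The aim is a Foxby-type inequality along a saturated chain $P=P_0\subset\cdots\subset P_d=\m$, of the form
\[
\mu_{i+1}(P_{j+1},A)\ \geq\ \mu_i(P_j,A),
\]
with strict inequality unless the relevant localization behaves like a canonical module. One gets this by passing to $A/P_j$ and tensoring with the residue field along the chain, as in the proof of Theorem \ref{rank-non-cm}. Iterating gives $\mu_n(\m,A)\geq\mu_{n-d}(P,A)$. Equality at $1$ should propagate back to show $A_P$ has $\mu_{n-d}(P,A)\leq 1$, and comparing with the nonvanishing above should force $\injdim A_P<\infty$ along the chain and eventually $\injdim A<\infty$. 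That reduces to subcase (a) or to the Gorenstein conclusion.
\end{enumerate}

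\textbf{Main obstacle.} The hard step is subcase (b): turning $\mu_n(\m,A)=1$ for $n>d$ into finiteness of $\injdim A$. Bass's lemma only gives nonvanishing, so one needs the quantitative version and must control where equality can hold. My first attempt will be to adapt the technique behind Theorem \ref{rank-non-cm}. That means proving, for $n\geq d$, the bound
\[
\mu_n(\m,M)\ \geq\ r\cdot(\text{a nonzero Bass number of } M_P \text{ at a minimal } P),
\]
with $M_P$ free of rank $r$, and then using that an Artinian ring with a Bass number equal to $1$ in degree $\geq 0$, together with unmixedness, must be Gorenstein.
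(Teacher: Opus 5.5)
Your case $n=d$ (via Theorem \ref{rank-non-cm}) and your subcase (a) (via the nonvanishing of $\mu_i(\m,A)$ for $\depth A\le i\le\injdim A$) are fine. The local half of (b) is also sound once you supply a length count:
\begin{itemize}
\item iterating Lemma \ref{bella} along a saturated chain gives $\mu_{n-d}(P,A)\le 1$;
\item in the minimal injective resolution of the Artinian ring $A_P$, the image landing in the single copy of $E_{A_P}(\kappa(P))$ has length an integer multiple of $\ell(A_P)=\ell(E_{A_P}(\kappa(P)))$;
\item so that image is $0$ or everything, hence $\injdim A_P<\infty$ and $A_P$ is Gorenstein. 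No unmixedness is needed here.
\end{itemize}

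The genuine gap is the passage from there to $\injdim A<\infty$. The inequalities $\mu_i(P,A)\le\mu_{i+\dim A/P}(\m,A)$ only bound Bass numbers at smaller primes from above by those at $\m$. They can never force $\mu_j(\m,A)=0$ for $j>n$. The dichotomy ``strict inequality unless the localization behaves like a canonical module'' is not available: the Serre-multiplicity proof of Lemma \ref{bella} gives only $\le$. Moreover, Gorensteinness of $A_Q$ for every $Q\ne\m$ says nothing about $\injdim A$; it holds for $k[[t^3,t^4,t^5]]$. So the hypothesis $\mu_n(\m,A)=1$ must be used a second time, globally at $\m$, and ``propagating along the chain'' gives no mechanism for that.

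The missing idea is how the paper proves Theorem \ref{my-thm} (take $M=A$, $r=1$): a rank argument on the $\m$-torsion of the minimal injective resolution $\I$ of the complete ring $A$.
\begin{itemize}
\item Since $H^i_\m(A)=0$ for $i>d$, the tail $0\to Z\to E^{r_d}\to E^{r_{d+1}}\to\cdots$ of $\G=\Gamma_\m(\I)$ is exact. Its Matlis dual is a minimal free resolution of $Z^\vee$ with Betti numbers $r_j=\mu_j(\m,A)$, $j\ge d$.
\item In $0\to H^d_\m(A)^\vee\to Z^\vee\to B^\vee\to 0$, the module $H^d_\m(A)^\vee\cong\Hom_R(A,R)$ (with $R$ a $d$-dimensional Gorenstein ring mapping onto $A$) has rank $1$ precisely because the $A_P$ are Gorenstein.
\item $B^\vee$ has constant rank by Corollary \ref{bijlee}, so $Z^\vee$ has a rank.
\item At the spot where $r_n\le 1$, the sequence $0\to\image f\to A^{r_n}\to\image g\to 0$ forces one of the two syzygies to have rank $0$. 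A rank-zero submodule of a free module is zero, so the resolution stops and $\injdim A<\infty$. This covers $r_n=0$ too, with no case split.
\end{itemize}

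You have also misread the hypothesis. $\Min\wh{A}=\Ass\wh{A}$ excludes embedded primes, not lower-dimensional components, e.g.\ $k[[x,y,z]]/(xy,xz)$. This is not cosmetic, because the rank argument needs $\Ass A=\Assh A$. At a minimal $P$ with $\dim A/P<d$ one has $\Hom_R(A,R)_P=0$, and Corollary \ref{bijlee} says nothing there, so $Z^\vee$ need not have a rank. That is why the paper's own argument reaches only the unmixed case (Theorem \ref{my-thm}), and the statement as given is quoted from Foxby. At that generality you would still need a further idea.
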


In view of Conjecture \ref{my-conj} we might ask whether there is a version available for modules. We prove
\begin{theorem}
\label{my-thm}
Let $(A,\m)$ be an unmixed Noetherian local ring of dimension $d$ and let $M$ be a finitely generated $A$-module. Assume that $\rank M = r \geq 1$. If $\mu_n(\m, M) \leq 1 $ for some  $n \geq d$ then $\injdim M < \infty$, $A$ is \CM \ and $A_P$ is Gorenstein for all $P \in \Min(A)$.
\end{theorem}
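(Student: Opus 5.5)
We reduce to the complete case and then use the structure of Bass numbers along a chain of primes.

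\emph{Reduction to the complete case.} Bass numbers at $\m$ are unchanged by completion: $\mu_i(\m,M)=\mu_i(\wh{\m},\wh{M})$. Injective dimension is likewise preserved, since $\injdim M<\infty$ if and only if $\mu_i(\m,M)=0$ for $i\gg 0$. Unmixedness of $A$ means every associated prime $Q$ of $\wh{A}$ has $\dim \wh{A}/Q=d$. Moreover $\wh{M}$ again has rank $r$: $M\otimes_A \wh{A}$ becomes free of rank $r$ after inverting the nonzerodivisors of $A$, and the minimal primes of $\wh{A}$ contract to minimal primes of $A$ because $A\to\wh{A}$ is flat. The \CM\ property descends from $\wh{A}$ to $A$. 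Finally, if $\wh{A}_Q$ is Gorenstein for every $Q\in\Min(\wh{A})$, then for $P\in\Min(A)$ we choose $Q$ minimal over $P\wh{A}$; the map $A_P\to\wh{A}_Q$ is flat local, so $A_P$ is Gorenstein. So we may assume $A$ is complete, hence a homomorphic image of a regular ring. In particular $A$ is catenary, every prime $P$ satisfies $\height P+\dim A/P=d$ by unmixedness, and a canonical module $\omega_A$ exists.

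\emph{Main step: $\injdim M<\infty$.} Take any $P\in\Ass A$. By unmixedness $\dim A/P=d$, so $P$ is minimal. Since $M$ has rank $r\ge 1$, $M_P\cong A_P^r$, and therefore $\mu_0(P,M)=r\,\mu_0(P,A)\ge r\ge1$. Next we use the standard fact that if $P\subsetneq Q$ with $\dim A_Q/PA_Q=1$ then $\mu_i(P,M)\ne0$ implies $\mu_{i+1}(Q,M)\ne0$. Applying it along a saturated chain from $P$ to $\m$ of length $d$ gives $\mu_d(\m,M)\ge1$, and in fact $\mu_{d+j}(\m,M)\ne0$ whenever some minimal $P$ has $\mu_j(P,M)\ne0$.

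The hypothesis is $\mu_n(\m,M)\le1$ for some $n\ge d$. The key input is Foxby's method from Theorem~\ref{fm}: over a complete ring, if $\injdim M=\infty$ then the Bass numbers at $\m$ eventually grow. Concretely, we want $\mu_{i+1}(\m,M)\ge 2$ whenever $\mu_i(\m,M)\neq 0$, for $i\ge d$, unless $M$ has finite injective dimension. I would try to prove this by the same argument Foxby uses in \cite[4.3]{F}, adapted from $A$ to $M$: localize at a minimal prime where $M$ is free, and use that $\mu_j(P,A_P)\ne 0$ for infinitely many $j$ whenever $A_P$ is not Gorenstein. This is the step I expect to be the main obstacle. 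The case $\mu_n(\m,M)=0$ is easier, since vanishing of one Bass number beyond depth forces $\injdim M<n$ by Roberts/Fossum–Foxby–Griffith–Reiten. For $\mu_n=1$ I would use the chain argument. If $M_P\cong A_P^r$ had infinite injective dimension over $A_P$ for some minimal $P$, then $\mu_j(P,M)=r\mu_j(P,A_P)\ne0$ for infinitely many $j$. A sharper count, summing over all chains from minimal primes of height-one steps, should then yield $\mu_n(\m,M)\ge 2$ or a contradiction. Thus $\injdim M<\infty$.

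\emph{Conclusions.} Once $\injdim M<\infty$, the Bass conjecture (Peskine–Szpiro, Hochster, Roberts) shows that a ring admitting a nonzero finitely generated module of finite injective dimension is \CM. So $A$ is \CM. For $P\in\Min(A)$, the module $M_P\cong A_P^r$ has finite injective dimension, so $\injdim A_P<\infty$ and $A_P$ is Gorenstein. This gives the theorem, with the growth/nonvanishing estimate for Bass numbers of $M$ in the infinite injective dimension case as the crux.
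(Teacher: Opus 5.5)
There is a genuine gap: the main step, showing that $\mu_n(\m,M)=1$ forces $\injdim M<\infty$, is not proved. Your reduction to the complete case, the case $\mu_n(\m,M)=0$ (via the no-gaps theorem for Bass numbers), and the final deductions from $\injdim M<\infty$ are fine.

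\emph{Why your plan does not reach the main step.} The growth statement you want is not established. Theorem \ref{fm} concerns $M=A$, and you give no argument for modules. Even granted, it says nothing when $n=d$, since it only bounds $\mu_{i+1}$ for $i\ge d$. The fallback chain argument cannot close the gap either. It only pushes information upward from minimal primes, and at a minimal prime $P$ the module $M_P\cong A_P^r$ is injective as soon as $A_P$ is Gorenstein. So in exactly the situation you must exclude (all $A_P$ Gorenstein, but $\injdim M=\infty$), no count over chains says anything about $\mu_n(\m,M)$ for $n>d$. For example, if $M=A$ is a complete non-\CM\ domain, the localization at $0$ is a field and you only get $\mu_d(\m,A)\ge 1$. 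Moreover, the fact you quote is only the nonvanishing version. To get even the Gorenstein property of $A_P$ for $P\in\Min(A)$, you need two further ingredients:
\begin{enumerate}[\rm (a)]
\item Foxby's inequality (Lemma \ref{bella}), iterated along a chain of length $d$, giving $\mu_{n-d}(P,M)\le\mu_n(\m,M)\le 1$.
\item A length count in the Artinian ring $A_P$, using $\ell(E_{A_P}(\kappa(P)))=\ell(A_P)$, which rules out $\mu_{n-d}(P,M)=1$ for $n>d$.
\end{enumerate}

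\emph{The missing idea.} The paper turns the Bass numbers of $M$ at $\m$ in degrees $\ge d$ into Betti numbers of a module that has a rank.
\begin{enumerate}[\rm (1)]
\item With $\G=\Gamma_\m(\I)$ and $Z=\ker\partial_d$ as in Construction \ref{b-const}, $\G$ is exact above degree $d$. Hence the Matlis dual $\cdots\to A^{r_{d+1}}\to A^{r_d}\to Z^\vee\to 0$ is a minimal free resolution of $Z^\vee$.
\item $Z^\vee$ has a rank, by the sequence $0\to H^d_\m(M)^\vee\to Z^\vee\to B^\vee\to 0$. Here $B^\vee$ has a rank by Corollary \ref{bijlee} together with $\Assh(A)=\Ass(A)$. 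Also $H^d_\m(M)^\vee\cong\Hom_R(M,R)$ has rank $r$ precisely because each $A_P$, $P$ minimal, is Gorenstein. So the Gorenstein property at minimal primes must be proved first, not last as in your outline.
\item In $0\to\image f\to A^{r_n}\to\image g\to 0$ the ranks add up to $r_n\le 1$, so one of the two images has rank $0$. Being a submodule of a free module over a ring with $\Ass A=\Min A$, it is then $0$.
\item This truncates the resolution and gives $\mu_j(\m,M)=0$ for $j>n$. The case $n=d$ is handled separately by Theorem \ref{rank-non-cm}.
\end{enumerate}
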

We give an example (see \ref{alive}) which shows that the result does not hold if we do not assume that  $M$ has a rank.
It might be tempting that under the hypotheses of the theorem if $r = 1$ then $M \cong \omega_A$, the canonical module of $A$. We give a counter-example to this, see \ref{sty}. However we can prove
\begin{proposition}
  \label{nuv} (under the assumptions of \ref{my-thm})  Also assume $A$ is a quotient of a Gorenstein local ring. Then
  $\nu(M) \geq r\type(A)$ (here $\nu(M)$ is the number of minimal   generators of $M$).
  Furthermore
   the following assertions are equivalent:
  \begin{enumerate}[\rm (i)]
    \item  $M \cong \omega_A^r$
    \item $\nu(M) = r\type(A)$.
  \end{enumerate}
\end{proposition}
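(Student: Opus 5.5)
The plan is to reduce everything to a statement about a module of finite projective dimension, using the Foxby–Sharp correspondence between modules of finite injective dimension and modules of finite projective dimension over a Cohen–Macaulay ring with a canonical module.

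\emph{Step 1 (reduction via Theorem \ref{my-thm}).} By Theorem \ref{my-thm}, $A$ is \CM, $\injdim M<\infty$, and $A_P$ is Gorenstein for every $P\in\Min(A)$. Since $A$ is \CM\ and a quotient of a Gorenstein local ring, the canonical module $\omega_A$ exists. Recall that $\nu(\omega_A)=\type(A)$.

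\emph{Step 2 (Sharp's equivalence).} I will invoke Sharp's theorem, i.e.\ Foxby equivalence for a \CM\ ring with canonical module. For a finitely generated $A$-module $M$ with $\injdim M<\infty$, the module $N=\Hom_A(\omega_A,M)$ is finitely generated with finite projective dimension. Moreover $\Tor_i^A(\omega_A,N)=0$ for $i>0$ and $M\cong \omega_A\otimes_A N$. Tensoring with the residue field $k$ gives
\[
M\otimes k\cong (\omega_A\otimes k)\otimes_k(N\otimes k),
\]
so $\nu(M)=\nu(\omega_A)\,\nu(N)=\type(A)\,\nu(N)$.

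Next I compute the rank of $N$. For $P\in\Min(A)$, the ring $A_P$ is Artinian Gorenstein, so $(\omega_A)_P\cong A_P$. Hence $N_P\cong \Hom_{A_P}(A_P,M_P)=M_P\cong A_P^r$. Thus $N$ has rank $r$.

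\emph{Step 3 (finite projective dimension and rank).} Let $F_\bullet\to N$ be a minimal free resolution, so that $\nu(N)=\rank F_0$. Let $K=\ker(F_0\to N)$. Localizing the sequence $0\to K\to F_0\to N\to 0$ at minimal primes gives $\rank K=\rank F_0-r$, hence $\nu(N)\geq r$. This proves $\nu(M)\geq r\,\type(A)$.

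Now suppose $\nu(N)=r$. Then $K$ has rank $0$, so $K_P=0$ for all $P\in\Min(A)$. Since $A$ is \CM, $\Ass A=\Min A$, and so $\Ass F_0\subseteq\Min A$. Because $K\subseteq F_0$, we get $\Ass K\subseteq \Min A$, and therefore $K=0$. Hence $N\cong A^r$ and $M\cong\omega_A\otimes A^r=\omega_A^r$. This gives (ii)$\Rightarrow$(i). The converse (i)$\Rightarrow$(ii) is immediate, since $\nu(\omega_A^r)=r\,\nu(\omega_A)=r\type(A)$.

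The main obstacle is Step 2: correctly invoking Sharp's equivalence, namely the isomorphism $M\cong\omega_A\otimes N$ together with the vanishing of higher $\Tor$. This requires only finite injective dimension and not that $M$ be maximal \CM, and it is exactly why the hypothesis that $A$ is a quotient of a Gorenstein ring (so that $\omega_A$ exists) is needed. Everything else is rank bookkeeping at the minimal primes, using that the $A_P$ are Gorenstein.
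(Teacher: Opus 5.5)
Your proof is correct, but at the decisive step it takes a different route from the paper.

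\textbf{The paper's route.} The paper uses the minimal $\omega_A$-resolution $0\to\omega_A^{s_p}\to\cdots\to\omega_A^{s_0}\to M\to 0$ from the Bruns--Herzog exercise on modules of finite injective dimension. From $s_i=\mu_{d-i}(\m,M)$ it gets $\nu(M)=\mu_d(\m,M)\type(A)$. It then takes both the inequality and the equality case from Theorem \ref{rank-non-cm}: $\mu_d(\m,M)\geq r$, and equality forces $M$ to be maximal Cohen--Macaulay, hence $M\cong\omega_A^s$ with $s=r$.

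\textbf{Your route.} Sharp's equivalence is the same structure in another guise. Applying $\omega_A\otimes_A-$ to a minimal free resolution of $N=\Hom_A(\omega_A,M)$ yields exactly that minimal $\omega_A$-resolution, so in fact $\nu(N)=s_0=\mu_d(\m,M)$. However, you never need to read $s_0$ as a Bass number. You bound $\nu(N)$ below by $\rank N=r$ directly. In the equality case you show $N$ is free using only that the Cohen--Macaulay ring $A$ has no embedded primes.

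\textbf{What each buys.} Your argument derives the proposition from Theorem \ref{my-thm}, Sharp's isomorphism $M\cong\omega_A\otimes_A\Hom_A(\omega_A,M)$, and elementary rank bookkeeping. It needs no second appeal to Theorem \ref{rank-non-cm} and its multiplicity and intersection-theorem machinery. In fact, neither the vanishing of $\operatorname{Tor}^A_i(\omega_A,N)$ nor the finiteness of the projective dimension of $N$ is used; only the isomorphism above is. The paper's route, in exchange, identifies $\nu(M)/\type(A)$ with the Bass number $\mu_d(\m,M)$. This makes the proposition read as a direct corollary of the paper's main theorem.
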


Next we  state our result regarding partial Euler characteristics of Bass numbers.
We show
\begin{theorem}
\label{bass}
Let $(A,\m)$ be a  Noetherian local ring of dimension $d$ and let $M \neq 0$ be a finitely generated $A$-module. Then $\sum_{i = 0}^{d-1}(-1)^i\mu_{d-1- i}(\m, M)  \geq 0$.
The following assertions are equivalent:
\begin{enumerate}[\rm (i)]
  \item $\sum_{i = 0}^{d-1}(-1)^i\mu_{d-1- i}(\m, M)  = 0$.
  \item $\mu_i(\m, M) = 0$ for $i < d$.
  \item $M$ is a maximal \CM \ $A$-module.
\end{enumerate}
\end{theorem}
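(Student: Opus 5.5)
Since Bass numbers at $\m$ and local cohomology are unchanged by completion, I will assume $A$ is complete. Let $E^\bullet$ be a minimal injective resolution of $M$. Then $\Gamma_\m(E^\bullet)$ is a complex with $\Gamma_\m(E^i)=E(k)^{\mu_i(\m,M)}$, and its cohomology is $H^i_\m(M)$. Applying Matlis duality gives a complex of finite free $A$-modules
\[
\cdots \rt F_d \xrightarrow{\ \partial_d\ } F_{d-1} \rt \cdots \rt F_1 \rt F_0 \rt 0, \qquad F_i = A^{\mu_i(\m,M)},
\]
with homology $H_i = H^i_\m(M)^\vee$. Minimality of $E^\bullet$ means the differentials have entries in $\m$. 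The key known input is that $\dim H^i_\m(M)^\vee \le i$ for every $i$.

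\textbf{Inequality.} Pick $Q\in\Spec A$ with $\dim A/Q=d$; such a $Q$ is minimal. Then $(H_i)_Q=0$ for all $i<d$. Hence the localized complex $(F_d)_Q\rt (F_{d-1})_Q\rt\cdots\rt (F_0)_Q\rt 0$ is exact in degrees $0,\dots,d-1$. An exact bounded complex of finite free modules over the local ring $A_Q$ splits. Therefore $B_Q=\image(\partial_d)_Q$ is free of rank $\sum_{i=0}^{d-1}(-1)^i\mu_{d-1-i}(\m,M)$, and this rank is $\ge 0$.

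\textbf{Equivalences.} The implication (ii)$\Rightarrow$(iii) holds because $\mu_i(\m,M)=0$ for $i<d$ means $\depth M\ge d$, which forces $M$ to be maximal \CM. For (iii)$\Rightarrow$(i), $\depth M=d$ gives $\mu_i(\m,M)=0$ for $i<d$, so the sum vanishes.

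The real content is (i)$\Rightarrow$(ii). Assume the sum is $0$. Then $B_Q=0$ for every $Q$ with $\dim A/Q=d$. By the splitting above, $(\ker\partial_{d-1})_Q=B_Q=0$ as well. Hence $\ker\partial_{d-1}$ has dimension $\le d-1$. Consider the truncated complex
\[
0\rt F_{d-1}\rt\cdots\rt F_0\rt 0 .
\]
It has length $d-1$, its homology in degrees $<d-1$ is $H_i$ of dimension $\le i$, and its top homology is $\ker\partial_{d-1}$ of dimension $\le d-1$. Its $H_0=H^0_\m(M)^\vee$ has finite length. The improved new intersection theorem (Evans--Griffith, valid in general after Roberts/Andr\'e) then forces $H_0=0$ whenever the complex is nonzero in degree $0$. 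This gives $H^0_\m(M)=0$, and hence, by minimality, $\mu_0(\m,M)=0$.

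To get all $\mu_i(\m,M)=0$ for $i<d$, I would argue by induction on $d$. Since $\depth M>0$, choose $x\in\m$ regular on $M$ and on $A$ modulo the relevant primes, and pass to $M/xM$ over $A/xA$. Use the standard relation $\mu_{i}(\m,M/xM)=\mu_{i}(\m,M)+\mu_{i+1}(\m,M)$. This relation holds once one knows $x$ kills nothing in the relevant Ext modules, which follows from $x$ being $M$-regular together with minimality, namely $x\,\Ext^i(k,M)=0$. Then check that the partial Euler sum for $M/xM$ in dimension $d-1$ equals the one for $M$ in dimension $d$, up to the boundary term $\mu_0(\m,M)=0$.

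I expect this last step to be the main obstacle: deducing vanishing of \emph{all} low Bass numbers, not just $\mu_0$. In particular, verifying the exact Bass-number relation under reduction modulo $x$ and controlling the dimension drop needs care. If the induction fails, the fallback is to apply the improved intersection theorem repeatedly to the successively truncated complexes, peeling off the split surjection $F_1\rt F_0$ at each stage.
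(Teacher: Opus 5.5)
Up to the truncated complex $0\rt F_{d-1}\rt\cdots\rt F_0\rt 0$ with $\dim H_i\le i$ for all $i$, your argument is the paper's, including the rank computation at $Q\in\Assh(A)$ for the inequality and the bound $\dim\ker\partial_{d-1}\le d-1$. The gap is the intersection theorem you then invoke. The improved new intersection theorem requires $H_i(F_\bullet)$ to have \emph{finite length} for every $i>0$. Here $H_i=H^i_\m(M)^\vee$ is only known to have dimension $\le i$, and the top homology $\ker\partial_{d-1}$ only has dimension $\le d-1$. So the theorem as you cite it does not give $H_0=0$.

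The missing ingredient is Foxby's dimension version, Theorem \ref{int}: if a complex of finite free modules of length $s$ has nonzero homology and $\dim H_i\le i+t$ for all $i$, then $\dim A\le s+t$. Take $s=d-1$ and $t=0$. Since $\dim A=d$, the truncated complex must be exact. Hence $H^i_\m(M)=0$ for $i\le d-2$ and $\ker\partial_{d-1}=0$. Then $H^{d-1}_\m(M)^\vee$, being a quotient of $\ker\partial_{d-1}$, vanishes too. So all $\mu_i(\m,M)$ with $i<d$ vanish at once, and no induction is needed.

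The induction you sketch has its own defect. The relation $\mu_i(\m,M/xM)=\mu_i(\m,M)+\mu_{i+1}(\m,M)$ computes Bass numbers of $M/xM$ over $A$. With it, the partial sum for $M/xM$ telescopes to $\mu_{d-1}(\m,M)+(-1)^d\mu_0(\m,M)$, which is not the sum for $M$. Over $A/xA$, for $M$-regular $x$, the correct relation is $\mu_i^{A/xA}(\m,M/xM)=\mu_{i+1}^{A}(\m,M)$. That one does make the sums agree once $\mu_0(\m,M)=0$, provided $x$ also avoids the primes of $\Assh(A)$ so that $\dim A/xA=d-1$.

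Even corrected, every inductive step still needs $\mu_0=0$, which requires the same dimension-type intersection theorem. Your fallback of splitting off $F_1\rt F_0$ fails for the same reason. After the splitting, the new bottom homology $H^1_\m(M)^\vee$ can have dimension $1$. That case needs $t=1$ in Theorem \ref{int}, not the improved new intersection theorem.
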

We need the new intersection theorem to prove the above result. Theorem \ref{bass} also implies the Bass conjecture, see \ref{bc}.

\emph{Technique used to prove our results:}

 We need the following result due to Foxby \cite[5.1]{F}.
\begin{lemma}
\label{bella} Let $(A,\m)$ be a Noetherian local ring of dimension $d \geq 1$ and let $M$ be a finitely generated $A$-module. Let $P$ be a prime ideal of with  $\dim A/P = 1$. Then for all $i \geq 0$ we have
$$\mu_i(P, M) \leq \mu_{i+1}(\m, M).$$
\end{lemma}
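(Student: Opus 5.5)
We plan to compute $\Ext_A$ from a module finite over $A/P$, using the minimal injective resolution of $M$. The quantity $\mu_i(P,M)$ will appear as a generic rank and $\mu_{i+1}(\m,M)$ as a bound on a torsion piece; a connecting homomorphism will compare them.

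\emph{Reduction to the complete case.} Let $Q$ be a prime of $\wh{A}$ minimal over $P\wh{A}$. Then $\dim \wh{A}/Q = 1$, because $\wh{A}/P\wh{A} = \widehat{A/P}$ is equidimensional of dimension $1$. The map $A_P\to \wh{A}_Q$ is flat local with Artinian closed fibre $F=\wh{A}_Q/P\wh{A}_Q$. The standard formula for Bass numbers along flat local maps gives
$$\mu_i(Q,\wh{M})=\sum_{j} \mu_j(P,M)\,\mu_{i-j}(F) \geq \mu_i(P,M)\,\mu_0(F) \geq \mu_i(P,M).$$
We also have $\mu_{i+1}(\m,M)=\mu_{i+1}(\wh{\m},\wh{M})$. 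So we may assume $A$ is complete. Then $B=A/P$ is a complete one-dimensional local domain, and its normalization $V$ is a complete DVR that is finite over $A$. Let $K$ denote the fraction field of $V$.

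\emph{The complex $\Hom_A(V,I^\bullet)$.} Let $I^\bullet$ be the minimal injective resolution of $M$. Every associated prime of $V$ is $P$ and $\operatorname{Supp} V=\{P,\m\}$, so $\Hom_A(V,E(A/Q))=0$ unless $Q\in\{P,\m\}$. Moreover:
\begin{itemize}
\item $\Hom_A(V,E(A/P))\cong\Hom_{A_P}(V_P,E(A/P))\cong K$, since $V_P=K$ is a finite-dimensional $\kappa(P)$-vector space of dimension $1$ over $K$;
\item $\Hom_A(V,E(A/\m))=D$, the injective hull of the residue field of $V$, which is $\cong K/V$.
\end{itemize}
Hence $\Hom_A(V,I^j)\cong D^{\mu_j(\m,M)}\oplus K^{\mu_j(P,M)}$. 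The $D$-parts form a subcomplex $C^\bullet$: they are exactly the $\m$-torsion, and the differentials preserve torsion. The quotient complex has terms $K^{\mu_j(P,M)}$. This quotient is $\Hom_{A_P}(\kappa(P),I^\bullet_P)\otimes_{\kappa(P)}K$, whose differentials vanish by minimality of $I^\bullet_P$. The long exact sequence then gives
$$\Ext^i_A(V,M)\xrightarrow{\ \alpha\ } K^{\mu_i(P,M)}\longrightarrow H^{i+1}(C^\bullet).$$
Here $\Ext^i_A(V,M)$ is finitely generated over $V$, so $L=\image\alpha$ is a finitely generated $V$-submodule. Therefore $K^{\mu}/L$, with $\mu=\mu_i(P,M)$, embeds in $H^{i+1}(C^\bullet)$, which is a subquotient of $D^{s}$ with $s=\mu_{i+1}(\m,M)$.

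\emph{The main obstacle: a corank count.} It remains to show that if $K^{\mu}/L$ with $L$ finitely generated embeds in a subquotient of $D^s$, then $\mu\le s$. I will use Matlis duality over the complete DVR $V$. The dual of $D^s$ is $V^s$. A subquotient of $D^s$ is Artinian, and its dual is a subquotient of $V^s$, hence has rank at most $s$. Since $L$ is finitely generated, $L\subseteq t^{-n}V^\mu$ for some $n$, where $t$ is a uniformizer of $V$. Thus $K^\mu/L$ surjects onto $K^\mu/t^{-n}V^\mu\cong D^\mu$. Equivalently, the divisible part of $K^\mu/L$ contains $D^\mu$ up to finite length. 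This gives an Artinian submodule $N\subseteq H^{i+1}(C^\bullet)$ whose Matlis dual has rank $\mu$; concretely, take $N$ to be the image of $t^{-m}V^\mu$ for $m\gg0$, where the rank is read off from socle growth. Dualizing the inclusion $N\subseteq H^{i+1}(C^\bullet)$ yields a surjection from a module of rank $\le s$ onto a module of rank $\mu$, so $\mu\le s$. This is the step I expect to need the most care: making the rank or corank bookkeeping precise through the subquotient. The fallback is to measure the length growth of $\Hom_V(V/t^m,-)$, which is $\le sm$ on subquotients of $D^s$ and $\ge \mu m - c$ on $K^\mu/L$.
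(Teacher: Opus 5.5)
Your proof is correct in substance, but it takes a different route from the paper. The paper neither completes nor normalizes. It works over the one-dimensional domain $R=A/P$ with an element $x\in\m\setminus P$ and uses the multiplicity symbol $e_1(x,-)$. Since $\Ext^i_A(A/P,M)$ localizes at $P$ to a $\kappa(P)$-space of dimension $\mu_i(P,M)$, one gets $e_1(x,\Ext^i_A(A/P,M))=\mu_i(P,M)\,e_1(x,R)$. Serre's inequality bounds this by the length of $\Ext^i_A(A/P,M)/x\Ext^i_A(A/P,M)$. The connecting map embeds that quotient in $\Ext^{i+1}_A(A/(P,x),M)$, whose length is at most $\ell(A/(P,x))\,\mu_{i+1}(\m,M)=e_1(x,R)\,\mu_{i+1}(\m,M)$, and one cancels $e_1(x,R)>0$.

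Your argument shows the same mechanism inside the minimal injective resolution: the generic summands $K^{\mu_i(P,M)}$ must be absorbed, via the connecting map, by the $E(A/\m)$-part in degree $i+1$. Passing to the normalization $V$ turns the final comparison into a rank count over a DVR. The price is heavier input: flat base change for Bass numbers, finiteness of the normalization of a complete domain, $\Hom_A(V,E_A(A/\m))\cong E_V$, and Matlis duality. The paper's multiplicity argument does the corank count directly over $A/P$ and needs none of this.

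One sentence in your last step is wrong and should be replaced. For $m\ge n$, the image of $t^{-m}V^\mu$ in $K^\mu/L$ is a finitely generated submodule of the Artinian module $H^{i+1}(C^\bullet)$, hence of finite length. Its Matlis dual therefore has rank $0$, not $\mu$.

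Take $N=K^\mu/L$ itself instead. It is Artinian as a submodule of $H^{i+1}(C^\bullet)$. The surjection $N\to D^\mu$ you found dualizes to an injection $V^\mu\hookrightarrow N^\vee$. Also, $N^\vee$ is a quotient of $H^{i+1}(C^\bullet)^\vee$, which is a subquotient of $V^s$. Hence $\mu\le\rank N^\vee\le s$.

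Your fallback has a similar unstated step. There $\ell(0:_{K^\mu/L}t^m)=m\cdot\rank_V L$, so the lower bound $\mu m-c$ requires $\rank_V L=\mu$. This does hold: $K^\mu/L$ sits in an $\m$-torsion module, so $L\otimes_V K=K^\mu$. Then in fact $K^\mu/L\cong D^\mu$, and either count is immediate.
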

Foxby uses dualizing complexes for the proof of the Lemma. We give an elementary
proof of it.

We also need the (slight) generalization of the new intersection theorem by Foxby, see \cite[1.2]{F}.
\begin{theorem}\label{int}
Let $(A,\m)$ be a Noetherian local ring. Let $0 \rt F_s \rt F_{s-1} \rt \cdots \rt F_0 \rt 0$ be a non-trivial complex of finitely generated complex of finitely generated free $A$-modules and let $t \geq 0$ be an integer such that $\dim H_i(F_\bullet) \leq i + t$ for all $i \geq 0$. Then $\dim A \leq s + t$.
\end{theorem}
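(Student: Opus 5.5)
The plan is to reduce Theorem \ref{int} to the classical New Intersection Theorem: if a non-exact complex $0 \rt G_s \rt \cdots \rt G_0 \rt 0$ of finitely generated free modules over a local ring $R$ has homology of finite length, then $\dim R \leq s$. I will use this as known (Peskine--Szpiro, Hochster, Roberts). I read ``non-trivial'' as $H_*(F_\bullet) \neq 0$, i.e.\ $F_\bullet$ is not exact. I also set $d = \dim A$.

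\emph{Reduction to a complete ring.} Replace $A$ by $\wh{A}$ and $F_\bullet$ by $F_\bullet \otimes_A \wh{A}$. Completion is faithfully flat, so it preserves the dimension of $A$, the dimensions of the homology modules, and non-exactness. Hence we may assume $A$ is complete. In particular $A$ is catenary, and every quotient $A/Q$ by a prime $Q$ is a complete local domain, hence catenary and equidimensional.

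\emph{Induction on $t$.} The case $t = 0$ is not literally the NIT, since $\dim H_i \leq i$ does not force finite length. So I will instead induct on $\delta = \dim \operatorname{Supp} H_*(F_\bullet)$, and prove the sharper statement
\[
\dim A \leq s + \max_i\bigl(\dim H_i - i\bigr)^{+}.
\]
If $\delta = 0$, the homology has finite length and the NIT gives $\dim A \leq s$.

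If $\delta > 0$, choose a minimal prime $Q$ with $\dim A/Q = d$. Then choose a prime $P \supseteq Q$ with $\dim A/P = 1$; by catenarity and equidimensionality of $A/Q$ this gives $\height P \geq \height P/Q = d-1$. Localizing, $F_\bullet \otimes A_P$ is a complex of free $A_P$-modules of the same length $s$, and
\[
\dim (H_i)_P \leq \dim H_i - \dim A/P = \dim H_i - 1.
\]
So the excess $\dim H_i - i$ drops by one while $\dim A_P \geq d-1$. Induction then gives $d - 1 \leq s + (t-1)$, which is what we want. Here Lemma-type comparisons like Lemma \ref{bella} are not needed, only localization.

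\emph{Main obstacle.} The key difficulty is choosing $P$ so that $F_\bullet \otimes A_P$ is still non-exact, i.e.\ $P \in \operatorname{Supp} H_*(F_\bullet)$, while also $P \supseteq Q$ with $\dim A/P = 1$. Both conditions are needed, because $Q$ controls $\height P$ and the support condition controls non-exactness.

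The first thing I would try is to take $Q$ minimal in $\operatorname{Supp} H_*(F_\bullet) \cap V(Q')$ for suitable minimal primes $Q'$. If that fails, I would argue by contradiction: if no dimension-one prime of $V(Q)$ lies in the support, then $\operatorname{Supp} H_* \cap V(Q) = \{\m\}$. In that case I would tensor $F_\bullet$ with a Koszul complex on a system of parameters of $A$ modulo the relevant part of the support. This would shift the problem into a finite-length situation over $A/Q$-related data, where the NIT applies directly.

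If that also fails, the fallback is prime avoidance. Pick $x \in \m$ outside all minimal primes of $\operatorname{Supp} H_i$ of dimension exactly $i + t$ and outside $Q$. Pass to $F_\bullet \otimes K(x)$, which has length $s+1$ and homology dimensions at most $i + t$ shifted appropriately, with $t$ decreased by one. This trades the localization step for a step of the form $s \mapsto s+1$, $t \mapsto t-1$, and again reduces to the NIT.
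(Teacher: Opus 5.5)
There is a genuine gap. The inductive step, which is the only content beyond the NIT, is never established, and it fails in two concrete ways.

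\textbf{The step fails at $t=0$.} With your normalization $\dim A\le s+\max_i(\dim H_i-i)^{+}$, localizing at $P$ makes all excesses $\le -1$. The inductive hypothesis then only returns $\dim A_P\le s$, hence $d\le s+1$ rather than $d\le s$. This is exactly the case the paper needs: in the proof of Theorem \ref{bass} it is applied with $s=d-1$, $t=0$. To exploit a negative excess you need an observation you never state: if $H_i(F_\bullet)=0$ for $i<m$, then over a local ring $F_\bullet$ is the direct sum of a split exact complex and a free complex concentrated in degrees $m,\dots,s$.

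\textbf{The prime $P$ need not exist.} Take $A=k[[x,y,z]]/(xy,xz)$ and $F_\bullet=K(y,z)$. Here $d=2$ and the only minimal prime of dimension $2$ is $Q=(x)$. Then $\operatorname{Supp}H_*(F_\bullet)=V(y,z)=\{(y,z),\m\}$ meets $V(Q)$ only in $\m$. The only one-dimensional prime in the support, $(y,z)$, is a minimal prime of $A$, so localizing there says nothing about $d$.

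Your fallbacks do not close this. ``Koszul on a system of parameters modulo the support'' is not an argument. The step $F_\bullet\mapsto F_\bullet\otimes K(x)$ preserves $s+t$ and works only while $t\ge 1$. If $t=0$ and $H_0\neq 0$ (necessarily of finite length), then $H_0/xH_0\neq 0$ survives in degree $0$ and $t$ cannot drop. Iterating therefore lands you at a complex of length $s+t$ with $\dim H_i\le i$, which is the statement itself, not the NIT.

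\textbf{The missing idea.} Pass to the domain $A/Q$ instead of localizing $A$, and drop the requirement $\dim A/P=1$.
\begin{enumerate}
\item With $A$ complete and $\dim A/Q=d$, let $\mathfrak p$ be a minimal element of $\operatorname{Supp}H_*(F_\bullet)\cap V(Q)$; this set contains $\m$. Put $m=\min\{i : H_i(F_\bullet)_{\mathfrak p}\neq 0\}$.
\item Shorten $F_{\mathfrak p}$ as above to a free complex $G$ concentrated in degrees $m,\dots,s$.
\item $G\otimes_{A_{\mathfrak p}}(A/Q)_{\mathfrak p}$ is a free complex of length $s-m$ over the local domain $(A/Q)_{\mathfrak p}$.
\item Its homology has finite length. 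For $Q\subseteq\mathfrak q\subsetneq\mathfrak p$, minimality gives $\mathfrak q\notin\operatorname{Supp}H_*(F_\bullet)$, so $F_{\mathfrak q}$ is split exact.
\item It is not exact, since its $m$-th homology is $H_m(G)/QH_m(G)\neq 0$ by Nakayama.
\item The NIT and catenarity of the complete domain $A/Q$ give $d-\dim A/\mathfrak p=\dim(A/Q)_{\mathfrak p}\le s-m$.
\item Since $\mathfrak p\in\operatorname{Supp}H_m(F_\bullet)$, we have $\dim A/\mathfrak p\le m+t$, hence $d\le s+t$.
\end{enumerate}
No induction is needed, and the argument works for every integer $t$.

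With this ingredient, deducing the statement from the ordinary NIT would be a genuinely different route from the paper's. The paper simply invokes Foxby's big \CM\ module argument, now valid in all characteristics. Without this ingredient, your proposal does not prove the theorem.
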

Foxby proves this result for local ring containing a field. His proof uses Hochster's theorem on  existence of big \CM \ modules over such rings. As existence of big \CM \ modules have been shown to exist for all local rings it is easy to check that  Foxby's proof of Theorem \ref{int} for equicharacteristic local rings generalize to all local rings.

We now describe in brief the contents of this paper. In section two we discuss some preliminaries. In section three we give a proof of Lemma \ref{bella}.
In section four we prove Theorem \ref{bass} and give our proof of Bass's conjecture.
 In the next section  we give our proof of Conjecture \ref{my-conj}. In the  section six we give a proof of Theorem \ref{rank-non-cm}. In section seven we prove Theorem \ref{my-thm} and Proposition \ref{nuv}.

\section{Preliminaries}
In this section we discuss a few preliminary facts that we need.
Throughout $(A,\m)$ is a Noetherian local ring of dimension $d$ and $M$ is a finitely generated $A$-module. By $\Min(M)$ (respectively $\Ass(M)$) we denote the set of minimal primes (respectively associate primes) of $M$. By $\Assh(A)$ we denote the set of primes $P$ of $A$ with $\dim A/P = d$.
\s\label{faithful} If $M$ is a faithful $A$ module then it is easy to check that $M_P$ is a faithful $A_P$ module for all primes $P$ of $A$. Similarly the completion $\wh{M}$ is a faithful $\wh{A}$-module. Let $M$ be generated by $\{ u_1, \ldots, u_l \}$. Consider the homomorphism $\psi \colon A \rt M^l$ defined by $\psi(a) = (au_1, \ldots, au_l)$. If $M$ is faithful then $\psi $ is injective. In particular $\Ass(A) \subseteq \Ass(M)$.

\s \label{zero} Conjecture \ref{my-conj} holds when $\dim A = 0$; see \cite[3.3.13]{BH}.

\s \label{mod-mult} Let $I$ be an $\m$-primary ideal of $A$. Let $M$ be a finite $A$-module of dimension $r$. Then $e(I, M) = \lim_{n\rt \infty} r!\ell(M/I^nM)/n^r$ is the multiplicity of $M$ \wrt \ $I$.
Let $\C_q(A)$ denote the abelian category of all finitely generated $A$-modules of dimension $\leq q$. Define the modified multiplicity on $\C_q(A)$ as
\[
e_q(I, M) = \begin{cases}
              e(I,M), & \mbox{if } \ \dim M = q. \\
              0, & \mbox{if } \ \dim M < q.
            \end{cases}
\]
The modified multiplicity is additive  under short exact sequences, i.e., if $0 \rt M_1 \rt M_2 \rt M_3 \rt 0$ is a short exact sequence in $\C_q(A)$ then by \cite[4.7.7]{BH}
$$e_q(I, M_2) = e_q(I, M_1) + e_q(I, M_3).$$
 We also have
 \[
 e_q(I, M) = \sum_{P}\ell_{A_P}(M_P) e_q(I, A/P),
 \]
 where the sum is over all prime ideals $P$ of $A$ with $\dim A/P = q$; see \cite[4.7.8]{BH}.
In particular
\begin{equation*}
  e_d(I, M) = \sum_{P \in \Assh(A)}\ell_{A_P}(M_P) e_d(I, A/P). \tag{*}
\end{equation*}

 We will need the following computation of multiplicites.
 \begin{theorem}
 \label{rachel} Let $(A,\m)$ be a complete local ring of dimension $d \geq 1$. Assume $A$ is a quotient of a complete Gorenstein local ring $R$ with $\dim R = d$. Let $M$ be a finitely generated  $A$-module. Assume there exists $r \geq1$ such that for every prime $P \in \Assh(A)$ we have  $\ell_{A_P}(M_P) = r \ell_{A_P}(A_P)$.
 Set  $M^\dagger = \Hom_R(M, R)$. Then we have
 \begin{enumerate}[\rm (1)]
   \item $e_d(\m, M) = r e_d(\m, A)$.
   \item $e_d(\m, M^\dagger) = re_d(\m, A)$.
 \end{enumerate}
 \end{theorem}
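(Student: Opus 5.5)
Part (1) follows directly from the associativity formula (*) of \ref{mod-mult}. Applying it to $M$ and to $A$ gives
\[
e_d(\m, M) = \sum_{P \in \Assh(A)} \ell_{A_P}(M_P)\, e_d(\m, A/P) = r\sum_{P \in \Assh(A)} \ell_{A_P}(A_P)\, e_d(\m, A/P) = r\, e_d(\m, A).
\]
Here the middle equality is just the hypothesis $\ell_{A_P}(M_P) = r\,\ell_{A_P}(A_P)$ for $P \in \Assh(A)$.

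For part (2) I reduce to the same formula applied to $M^\dagger$. The first step is to compare primes. Since $R$ is complete Gorenstein with $\dim R = d = \dim A$ and $A = R/J$, each $P \in \Assh(A)$ corresponds to a prime $\mathfrak{p}$ of $R$ with $\dim R/\mathfrak{p} = d$. As $R$ is Gorenstein, hence \CM\ and equidimensional, such a $\mathfrak{p}$ is a minimal prime of $R$, so $R_{\mathfrak{p}}$ is an Artinian Gorenstein ring. Also $M^\dagger$ is an $A$-module, being annihilated by $J$. Its localization at $P$ is
\[
(M^\dagger)_P \cong \Hom_{R_\mathfrak{p}}(M_\mathfrak{p}, R_\mathfrak{p}).
\]
Because $R_\mathfrak{p}$ is Artinian Gorenstein, $R_\mathfrak{p}$ is the injective hull of its residue field, so $\Hom_{R_\mathfrak{p}}(-, R_\mathfrak{p})$ is Matlis duality on finite-length modules and preserves length. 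Hence
\[
\ell_{A_P}((M^\dagger)_P) = \ell_{R_\mathfrak{p}}(M_\mathfrak{p}) = \ell_{A_P}(M_P) = r\,\ell_{A_P}(A_P),
\]
where lengths over $A_P$ and $R_\mathfrak{p}$ agree since $A_P$ is a quotient of $R_\mathfrak{p}$ with the same residue field. This is the hypothesis of part (1) for $M^\dagger$, so part (1) applied to $M^\dagger$ gives $e_d(\m, M^\dagger) = r\,e_d(\m, A)$.

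The argument needs two small checks. First, $\dim M^\dagger \le d$, so the modified multiplicity makes sense; this holds since $M^\dagger$ is a finitely generated $A$-module. Second, if $\dim M^\dagger < d$ the formula still holds: every term in (*) then vanishes, and this is consistent because $r \ge 1$ forces $M_P \ne 0$ for $P \in \Assh(A)$, so $(M^\dagger)_P \neq 0$ and in fact $\dim M^\dagger = d$. The main obstacle is the prime correspondence and the length preservation at minimal primes of $R$. The key point is that $\dim R = \dim A$ forces $\Assh(A)$ to sit inside $\Min(R)$, where $R$ is Artinian Gorenstein and Matlis duality applies.
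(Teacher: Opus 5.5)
Your proposal is correct and follows the paper's proof: part (1) comes straight from the associativity formula (*), and part (2) uses the fact that each $P \in \Assh(A)$ pulls back to a height-zero prime $\mathfrak{p}$ of $R$. At such a prime, Matlis duality over the Artinian Gorenstein ring $R_{\mathfrak{p}}$ (the paper's citation \cite[3.2.12]{BH}) gives $\ell_{A_P}((M^\dagger)_P) = \ell_{A_P}(M_P) = r\,\ell_{A_P}(A_P)$, and (*) then yields the result.
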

 \begin{proof}
 (1) The assertion holds by (*).

  (2) Let $P \in \Assh(A)$ and let $Q$ be inverse image of $P$ in $R$. We note that $\height Q = 0$ and there exists a surjective homomorphism of Artin local rings $R_Q \rt A_P$. We have
  $$ \ell(M^\dagger)_P = \ell_{R_Q}(\Hom_R(M_P, R_Q)) = \ell_{A_P}(M_P) = r \ell_{A_P}(A_P).$$
  Here the second equality holds by \cite[3.2.12]{BH}. Now the result follows by (*).
 \end{proof}
\section{Proof of Lemma \ref{bella}}
In this section we give
\begin{proof}[Proof of Lemma \ref{bella}]
We have nothing to show if $\mu_i(P.M) = 0$. Also note that if $\mu_{i+1}(\m, M) = 0$ then $\Ext^{i}_A(A/P, M) = 0$  (see \cite[3.1.13]{BH})  and so $\mu_i(P, M) = 0$.
Thus we may assume that $\mu_i(P, M)  \neq 0$ and so by our previous argument we have $\mu_{i+1}(\m, P) \neq 0$.

The ring $R = A/P$ is a one-dimensional domain. Let $x \in \m/P$ be $R$-regular. Set $I = (x)$. Let $e_1(I, -)$ be the modified multiplicity \wrt \ $I$ on the category of all finitely generated $R$-modules of dimension $\leq 1$. We note that $e_1(I, R) = \ell_A(A/(P, x))$.
We also have
\[
e_1(I, \Ext^i_A(A/P, M)) = \ell_{A_P}\left(\Ext^i_{A_P}(\kappa(P), M_P)\right) e_1(I, R) = \mu_i(P, M)e_1(I, R).
\]
We also have
$$ \ell\left(\Ext^{i+1}_A(A/(P,x), M)\right) \leq \ell(A/(P, x))\mu_{i+1}(\m, M) = \mu_{i+1}(\m, M)e_1(I, R). $$
The exact sequence $0 \rt A/P \xrightarrow{x} A/P \rt A/(P, x) \rt 0$ induces a long exact sequence in cohomology
\[
\Ext^i_A(A/P, M))  \xrightarrow{x} \Ext^i_A(A/P, M)) \xrightarrow{\delta} \Ext^{i+1}_A(A/(P,x), M).
\]
Let $L = \image \delta$. We have $e_1(I, \Ext^{i}_A(A/P, M)) \leq \ell(L)$, by a result of Serre cf., \cite[4.7.6]{BH}. We also have $\ell(L) \leq \ell\left(\Ext^{i+1}_A(A/(P,x), M)\right)  \leq  \mu_{i+1}(\m, M)e_1(I, R)$.
The result follows as $e_1(I,R) > 0$.
\end{proof}

\section{Proof of Theorem \ref{bass}}
In this section we prove Theorem \ref{bass}. We also give a construction which is crucial for us.

\begin{construction}
\label{b-const} Let $(A,\m)$ be a complete Noetherian local ring of dimension $d$ and let $M$ be a
finitely generated $A$-module
Let $\I \colon  0 \rt \I^0 \rt \I^1 \rt \cdots \rt \I^d\xrightarrow{f_d}  \I^{d+1} \rt \cdots$ be a minimal injective resolution of $M$. Let $r_i = \mu_i(\m, M)$ for $i \geq 0$. Let $E = E_A(A/\m)$ be the injective hull of the residue field of $A$.
 Let $\Gamma_\m(-)$ be the $\m$-torsion functor. Let $\Gamma_\m(\I) = \G$.
We have
$$ \G \colon 0 \rt E^{r_0} \rt \cdots E^{r_{d-1}} \xrightarrow {\partial_{d-1} } E^{r_d} \xrightarrow{\partial_d} E^{r_{d+1}} \rt \cdots.$$
Set $B = \image \partial_{d-1}$, $Z = \ker \partial_{d}$. We have a complex
$$\G^\prime \colon 0 \rt E^{r_0} \rt \cdots E^{r_{d-1}}  \rt B \rt 0.$$
Dualizing we obtain a complex
$$\mathcal{C}\colon  0 \rt B^\vee \rt F_{d-1} \xrightarrow{\delta_{d-1}} \cdots \rt F_0 \rt 0, \quad \text{where} \ F_i = A^{r_i}.$$
The homology of $\C$ is $H^i_\m(M)^\vee$ at degree $i$.
\end{construction}

\s\label{good} If $A$ is complete then $A$ is a homomorphic image of a Gorenstein local ring $R$ with $\dim R = \dim A = d$. Then it follows from \cite[3.5.11]{BH} that $\dim H^i_\m(M)^\vee \leq  i$.
\begin{corollary}
\label{bijlee}(with hypotheses as in \ref{b-const}). Let $P \in \Assh(A)$. Then $B^\vee_P$ is free $A_P$-module of rank $\sum_{i =0}^{d-1}(-1)^i\mu_{d-1-i}(\m, M)$.
\end{corollary}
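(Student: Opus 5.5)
Localize the complex $\C$ of Construction~\ref{b-const} at $P \in \Assh(A)$. Since $A$ is complete, \ref{good} gives $\dim H^i_\m(M)^\vee \leq i$ for all $i$. For $i \leq d-1$ we have $\dim H^i_\m(M)^\vee < d = \dim A/P$, so $(H^i_\m(M)^\vee)_P = 0$. Hence the localized complex
$$\C_P \colon 0 \rt B^\vee_P \rt (F_{d-1})_P \rt \cdots \rt (F_0)_P \rt 0$$
is exact, with one possible exception: we must check the homology at the spot $B^\vee_P$. I will treat that spot as degree $d$. Its homology is $\ker(B^\vee \rt F_{d-1})$, and I claim this is zero, because the map $B^\vee \rt F_{d-1} = (E^{r_{d-1}})^\vee$ is the Matlis dual of the surjection $E^{r_{d-1}} \rt B$, and the dual of a surjection is injective. (This is consistent with the paper's statement that the homology of $\C$ at degree $i$ is $H^i_\m(M)^\vee$, which is only asserted for $i \leq d-1$.)

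So $\C_P$ is an exact sequence $0 \rt B^\vee_P \rt A_P^{r_{d-1}} \rt \cdots \rt A_P^{r_0} \rt 0$ over the zero-dimensional local ring $A_P$. Split it into short exact sequences and work from the right. The last map $A_P^{r_1}\rt A_P^{r_0}$ is surjective onto a free module, so it splits and its kernel is projective, hence free. Inductively each kernel is a direct summand of a free module, hence free over the local ring $A_P$. In particular $B^\vee_P$ is free. Note also that $B^\vee$ is finitely generated, since $B \subseteq E^{r_d}$ is artinian and Matlis duality over a complete ring sends artinian modules to finitely generated ones. So $B^\vee_P$ is free of finite rank.

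To compute the rank, use lengths over the Artinian ring $A_P$. Exactness gives
$$\ell(B^\vee_P) = \sum_{i=0}^{d-1}(-1)^{i}\, r_{d-1-i}\, \ell(A_P),$$
so the rank is $\sum_{i=0}^{d-1}(-1)^i\mu_{d-1-i}(\m,M)$.

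The only delicate point is the exactness at $B^\vee_P$, that is, the injectivity of $B^\vee \rt F_{d-1}$. This follows from the exactness of Matlis duality, so I expect no real obstacle. The step most worth care is checking that the indexing of $\C$ makes the $H^i_\m(M)^\vee$ sit in degrees $0,\dots,d-1$ exactly as claimed.
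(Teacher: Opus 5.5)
Your proof is correct and is the same argument as the paper's, which simply observes that \ref{good} makes $\C_P$ exact for $P \in \Assh(A)$ and concludes. You supply the details the paper leaves implicit: injectivity of $B^\vee \rt F_{d-1}$ by exactness of Matlis duality, the vanishing $(H^i_\m(M)^\vee)_P = 0$ for $i \leq d-1$, and freeness and the rank of $B^\vee_P$ via splitting over the Artinian local ring $A_P$.
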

\begin{proof}
By \ref{good} it follows that $\mathcal{C}_P$ is exact for all $P \in \Assh(A)$. The result follows.
\end{proof}

Next we give
\begin{proof}[Proof of Theorem \ref{bass}]
We may assume that $A$ is complete. We make the construction as in \ref{b-const}. We have
$\sum_{i =0}^{d-1}(-1)^i\mu_{d-1-i}(\m, M) = \rank_{A_P} B^\vee_P \geq 0$ for all primes $P \in \Assh(A)$.

We now prove the equivalent conditions.
Clearly (ii) and (iii) are equivalent and they imply (i). Assume (i). Then $B^\vee_P = 0$ for all primes $P \in \Assh(A)$. So $\dim B^\vee \leq d -1$. Set $D = \ker \delta_{d-1}$. Then $D/B^\vee = H^{d-1}_\m(M)^\vee$ which has dimension $\leq d-1$. So $\dim D \leq d-1$. Consider the complex which is a truncation of $\mathcal{C}$.
$$\mathcal{D}\colon  0  \rt F_{d-1} \xrightarrow{\delta_{d-1}}F_{d-2} \rt \cdots \rt F_0 \rt 0, \quad \text{where} \ F_i = A^{r_i}.$$
It has homology $H^i_\m(M)^\vee $ for $i \leq d-2$ and $D$ for $i = d-1$. Thus $\dim H_i(\mathcal{D}) \leq i$ for all $i$. As $\dim A = d$ it follows by Theorem \ref{int} that $\mathcal{D}$ is exact. So $H^i_\m(M)^\vee = 0$ for all $i \geq d-2$ and $D = 0$. As $H^{d-1}_\m(M)^\vee$ is a quotient of $D$ it follows that $H^{d-1}_\n(M)^\vee =0$. So $H^i_\m(M) = 0$ for all $i \leq d -1$. The result follows.
\end{proof}
We now give our proof of Bass conjecture.
\begin{theorem}\label{bc}
Let $(A,\m)$ be a Noetherian local ring. Assume there exists   a finitely generated $A$-module $M \neq 0$ with $\injdim M < \infty$. Then $A$ is \CM.
\end{theorem}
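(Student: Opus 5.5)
The plan is to reduce to the situation of Theorem \ref{bass} by showing that $\mu_i(\m, M) = 0$ for all $i < d$. Once this holds, the equivalence (ii)$\Leftrightarrow$(iii) of Theorem \ref{bass} gives that $M$ is a maximal \CM\ $A$-module, so $\dim M = d$ and $\depth M = d$. Since $M \neq 0$ is finitely generated with $\injdim M < \infty$, we have $\injdim M = \depth A$ (\cite[3.1.17]{BH}). We also always have $\injdim M \geq \dim M$: the Bass numbers $\mu_{\dim M}(\m, M)$ are nonzero, which follows from the standard inequality $\mu_{i}(P,M)\neq 0 \Rightarrow \mu_{i+1}(Q,M)\neq 0$ for $P\subsetneq Q$ with $\dim A_Q/PA_Q = 1$; the one-dimensional case of this is exactly Lemma \ref{bella}. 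Combining, $d = \dim M \leq \injdim M = \depth A \leq d$, so $A$ is \CM.

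However, directly getting $\mu_i(\m,M)=0$ for $i<d$ is not available. So instead I will argue by induction on $d$, applying Theorem \ref{bass} not to $M$ over $A$ but to localizations. This requires first arranging that $\dim M = d$.

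\textbf{Step 1: reduce to $\dim M = d$.} Pick $P \in \Assh(A)$. Replacing $A$ by $\wh{A}$ (finite injective dimension and the \CM\ property are both preserved) and choosing $P$ appropriately, the key point is whether $M_P \neq 0$ for some prime of maximal dimension. If not, I will pass to $M' = \Hom_A(M, E)$-type duals, or better, use the Auslander--Buchsbaum-type formula $\injdim M = \depth A$ together with Theorem \ref{bass} applied to the ring $A/\operatorname{ann} M$.

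\textbf{Step 2: the cleaner route.} Set $s = \injdim M = \depth A$. Take a maximal $A$-regular sequence $x_1,\dots,x_s$. Then $N = M/(x)M$ still has finite injective dimension over $\bar A = A/(x)$, since $x$ is also $M$-regular because $\depth M \ge$ ... This step is shaky. Instead, the main intended argument should be the following. In Construction \ref{b-const}, finite injective dimension $s$ gives $r_i = 0$ for $i > s$. Then $\Gamma_\m(\I)$ dualizes to a finite free complex
\[
0 \rt F_s \rt \cdots \rt F_0 \rt 0,
\]
whose homology $H^i_\m(M)^\vee$ satisfies $\dim \leq i$ by \ref{good}. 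The complex is nontrivial since $r_s = \mu_s(\m,M)\ne 0$. Theorem \ref{int} with $t=0$ then yields $d = \dim A \leq s = \depth A$, so $A$ is \CM.

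\textbf{Main obstacle.} The delicate point is justifying that the dualized complex is a complex of finite free modules of length $s$ with the stated homology. Since $\G$ is a bounded complex of finite direct sums of $E$ and $A$ is complete, Matlis duality gives $E^\vee = A$ and converts $\G$ to $F_\bullet$ with $H_i(F_\bullet) = H^i_\m(M)^\vee$. Nontriviality comes from $\mu_{\injdim M}(\m,M)\neq 0$ (\cite[3.1.17]{BH}). With this in place, Steps 1 and 2's detours are unnecessary, and the proof is just Construction \ref{b-const} without truncation, followed by Theorem \ref{int}.
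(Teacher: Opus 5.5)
Your final argument (the ``cleaner route'') is correct, and it differs from the paper's. You apply Theorem~\ref{int} with $t=0$ directly to the whole dual complex $\G^\vee\colon 0\rt A^{r_s}\rt\cdots\rt A^{r_0}\rt 0$, where $s=\injdim M$. Its homology $H^i_\m(M)^\vee$ has dimension $\le i$ by \ref{good}, so you get $d\le s=\depth A$ in one step.

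The paper instead argues by contradiction and goes through Theorem~\ref{bass}:
\begin{enumerate}[\rm (a)]
\item if $\depth A<d$, then $r_d=0$ and $B=0$;
\item Corollary~\ref{bijlee} then forces $\sum_{i=0}^{d-1}(-1)^i\mu_{d-1-i}(\m,M)=0$;
\item Theorem~\ref{bass} (which itself uses Theorem~\ref{int} on the truncated complex $\mathcal{D}$) makes $M$ maximal \CM;
\item hence $d=\dim M\le\injdim M=\depth A<d$, a contradiction.
\end{enumerate}
Your route is shorter and is essentially the standard deduction of Bass's conjecture from the intersection theorem. It needs neither Corollary~\ref{bijlee}, Theorem~\ref{bass}, nor the inequality $\dim M\le\injdim M$. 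The paper's detour is deliberate: it presents Bass's conjecture as a corollary of its Euler-characteristic result, Theorem~\ref{bass}.

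Two repairs are needed. First, ``non-trivial'' in Theorem~\ref{int} has to mean ``not exact''; this is how the paper uses it to conclude that $\mathcal{D}$ is exact. So $r_s\neq 0$ by itself only shows the complex is nonzero. Justify non-exactness by $H^{\depth M}_\m(M)\neq 0$ (since $M\neq 0$), or by noting that $\G^\vee$ is a nonzero minimal complex.

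Second, delete the opening plan and Steps 1--2. The claim that $\mu_i(\m,M)=0$ for all $i<d$ is not merely unavailable but false in general. For example, $M=A/\m$ over a regular local ring $A$ of positive dimension has finite injective dimension, yet $\mu_0(\m,M)=1$. The paper obtains maximal Cohen--Macaulayness of $M$ only under the contradictory hypothesis $\depth A<d$.
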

\begin{proof}
We may assume that $A$ is complete. Let $d = \dim A$ and $t = \depth A$. Suppose if possible $A$ is not \CM. Then $t  < d$. We also have $\injdim M = t < d$.  We make the construction as in \ref{b-const}. As $r_d = 0$ so $B = 0$. So $B^\vee = 0$. By \ref{bijlee} it follows that $\sum_{i =0}^{d-1}(-1)^i\mu_{d-1-i}(\m, M) = 0$. By Theorem \ref{bass} we get that $M$ is a maximal \CM \ $A$-module. In particular $\dim M = d$. But $\dim M \leq \injdim M$. So we have $d \leq t < d$ which is a contradiction. So $A$ is \CM.
\end{proof}
\section{Proof of Conjecture \ref{my-conj}}
We give
\begin{proof}[Proof of Conjecture \ref{my-conj}]
 When $d = 0$ the result follows from \ref{zero}. So assume $d > 0$.
We may assume that $A$ is complete.
  Let $P \in \Assh(A)$. By \ref{bella} we have $\mu_0(P, M) \leq 1$. As $M_P$ is a faithful $A_P$-module we get $\mu_0(P, M) \geq 1$. So $\mu_0(P, M) = 1$. By \ref{zero} again we get $M_P = \omega_P$. As $A$ is complete it is a quotient of a complete Gorenstein local ring $R$ with $\dim R = d$. Set $(-)^\dagger = \Hom_R(-, R)$. By local duality we have $M^\dagger = H^d_\m(M)^\vee$.
  So by \ref{rachel},  we have $e_d(\m, H^d_\m(M)^\vee) = e_d(\m, A)$. We make the construction as in \ref{b-const}. Let $\I$ be  a minimal injective resolution of $M$.
   Let $\Gamma_\m(-)$ be the $\m$-torsion functor. Let $\Gamma_\m(\I) = \G$.
We have
$$ \G \colon 0 \rt E^{r_0} \rt \cdots E^{r_{d-1}} \xrightarrow {\partial_{d-1} } E^{r_d} \xrightarrow{\partial_d} E^{r_{d+1}} \rt \cdots.$$
Set $B = \image \partial_{d-1}$, $Z = \ker \partial_{d}$.  By our assumption $r_d = 1$. We have two  exact sequences $0  \rt B \rt Z \rt H^d_\m(M) \rt 0$ and $0 \rt Z \rt E \rt U \rt 0$ where $U = \image \partial_d$.
Dualizing we get two exact sequences $0 \rt H^d_\m(M)^\vee \rt Z^\vee \rt B^\vee \rt 0$ and $0 \rt U^\vee \rt A \rt Z^\vee \rt 0$. By the first exact sequence we get that $e_d(\m, Z^\vee) \geq e_d(\m, H^d_\m(M)^\vee)  = e_d(\m, A)$. By the second exact sequence we get $e_d(\m, Z^\vee) \leq  e_d(\m, A)$. So $e_d(\m, Z^\vee) = e_d(\m, H^d_\m(M)^\vee)  = e_d(\m, A)$. Thus $e_d(\m, B^\vee) = e_d(\m, U^\vee) = 0$. It follows that $\dim B^\vee < d$. Let $P \in \Assh(A)$. Then $B^\vee_P = 0$. By \ref{bijlee},  $B^\vee_P$ is free $A_P$-module of rank $\sum_{i =0}^{d-1}(-1)^i\mu_{d-1-i}(\m, M)$. As $B^\vee_P = 0$ we obtain
$\sum_{i =0}^{d-1}(-1)^i\mu_{d-1-i}(\m, M) = 0$.  By Theorem \ref{bass} we get that $M$ is a MCM $A$-module.

 As $M$ is a MCM $A$-module it is a MCM $R$-module. So $M \cong (M^\dagger)^\dagger$ by \cite[3.3.10]{BH}. By \cite[1.9]{S},   if $P \in \Ass M$ then $P \in \Ass M^\dagger$ such that $\dim A/P = \dim M^\dagger$. But $\dim M^\dagger = d$. So if $P \in \Ass M$ we get that
$\dim A/P = d$. But $M$ is faithful. So by \ref{faithful},  $\Ass A \subseteq \Ass M$. Thus $A$ is unmixed. As $e_d(\m, U^\vee) = 0$ we get that $\dim U^\vee < d$. As $U^\vee$ is a submodule of $A$ and as $A$ is unmixed we get $U^\vee = 0$. So $U = 0$. It follows that $r_j = 0$ for $j > d$. Thus $\injdim M < \infty$. So $A$ is \CM \ by Bass's conjecture, see \ref{bc}.   As $M$ is a faithful
  \CM \ $A$-module of type one it is isomorphic to $\omega_A$.
\end{proof}
\section{Proof of Theorem \ref{rank-non-cm}}
In this section we give
\begin{proof}[Proof of Theorem \ref{rank-non-cm}]
Let $P \in \Assh(A)$. We have $\mu_0(P, M) \leq \mu_d(\m, M)$. As $M$ has rank $r$ we get $M_P = A_P^r$. So $\mu_0(P, M) = r \type(A_P)$. Thus $\mu_d(\m, M) \geq r$. If $\mu_d(\m, M) = r$ then $A_P$ is Gorenstein for all $P \in \Assh(A)$.

We now prove the equivalent conditions. The assertion (ii) $\implies$ (i) is trivial. We prove the converse. We note that we can assume that $A$ is complete, the only thing we need to show that if $\wh{A}$ is \CM \ and $\wh{A}_Q$ is Gorenstein for all minimal primes $Q$ then $A_P$ is Gorenstein for all minimal primes $P$ of $A$. To see this choose a minimal prime $Q$ of $\wh{A}$ minimal over $\wh{A}/P$ such that $\dim \wh{A}/Q = \dim A/P$. We have $Q$ is a minimal prime of $\wh{A}$ and we have a local flat map $A_P \rt \wh{A}_Q$. As $\wh{A}_Q$ is Gorenstein it follows from \cite[23.4]{M}, that $A_P$ is Gorenstein. With these arguments we assume that $A$ is complete.

As argued before, if $r = \mu_d(\m, M)$ then $A_P$ is Gorenstein for all primes $P \in \Assh(A)$. Thus we have nothing to show if $d = \dim A = 0$. So assume $d > 0$.
We make the construction as in \ref{b-const}. Let $\I$ be  a minimal injective resolution of $M$.
   Let $\Gamma_\m(-)$ be the $\m$-torsion functor. Let $\Gamma_\m(\I) = \G$.
We have
$$ \G \colon 0 \rt E^{r_0} \rt \cdots E^{r_{d-1}} \xrightarrow {\partial_{d-1} } E^{r_d} \xrightarrow{\partial_d} E^{r_{d+1}} \rt \cdots.$$
Set $B = \image \partial_{d-1}$, $Z = \ker \partial_{d}$.  By our assumption $r_d = r$. We have two  exact sequences $0  \rt B \rt Z \rt H^d_\m(M) \rt 0$ and $0 \rt Z \rt E^r \rt U \rt 0$ where $U = \image \partial_d$.
Dualizing we get two exact sequences $0 \rt H^d_\m(M)^\vee \rt Z^\vee \rt B^\vee \rt 0$ and $0 \rt U^\vee \rt A^r \rt Z^\vee \rt 0$.
As $A$ is complete it is a quotient of a complete Gorenstein local ring $R$ with $\dim R = d$. Set $(-)^\dagger = \Hom_R(-, R)$. We have $H^d_\m(M)^\vee = \Hom_R(M, R)$. By \ref{rachel} we obtain $ e_d(\m, H^d_\m(M)^\vee)  = e_d(\m, A^r).$
By the first exact sequence we get that $e_d(\m, Z^\vee) \geq e_d(\m, H^d_\m(M)^\vee)  = e_d(\m, A^r)$. By the second exact sequence we get $e_d(\m, Z^\vee) \leq  e_d(\m, A^r)$. So $e_d(\m, Z^\vee) = e_d(\m, H^d_\m(M)^\vee)  = e_d(\m, A^r)$. Thus $e_d(\m, B^\vee) = e_d(\m, U^\vee) = 0$. It follows that $\dim B^\vee < d$. Let $P \in \Assh(A)$. Then $B^\vee_P = 0$. By \ref{bijlee},  $B^\vee_P$ is free $A_P$-module of rank $\sum_{i =0}^{d-1}(-1)^i\mu_{d-1-i}(\m, M)$. As $B^\vee_P = 0$ we obtain
$\sum_{i =0}^{d-1}(-1)^i\mu_{d-1-i}(\m, M) = 0$.  By Theorem \ref{bass} we get that $M$ is a MCM $A$-module.

As $M$ is a MCM $A$-module it is a MCM $R$-module. So $M \cong (M^\dagger)^\dagger$ by \cite[3.3.10]{BH}. By \cite[1.9]{S} if $P \in \Ass M$ then $P \in \Ass M^\dagger$ such that $\dim A/P = \dim M^\dagger$. But $\dim M^\dagger = d$. So if $P \in \Ass M$ we get that
$\dim A/P = d$. But $M$ has rank $r$. Therefore $M_Q = A_Q^r$ for each prime $Q \in \Ass(A)$. So  $\Ass A \subseteq \Ass M$. Thus $A$ is unmixed. As $e_d(\m, U^\vee) = 0$ we get that $\dim U^\vee < d$. As $U^\vee$ is a submodule of $A^r$ and as $A$ is unmixed we get $U^\vee = 0$. So $U = 0$. It follows that $r_j = 0$ for $j > d$. Thus $\injdim M < \infty$. So $A$ is \CM \ by Bass's conjecture, see \ref{bc}.   As $M$ is maximal \CM \ $A$-module with finite injective dimension it follows that $M \cong \omega_A^s$ for some $s$. As $\mu_d(\m, M) = r$ it follows that $s = r$.
\end{proof}
\section{Proof of Theorem \ref{my-thm}}
In this section we give
\begin{proof}[Proof of Theorem \ref{my-thm}]
By
Theorem \ref{rank-non-cm} the result follows if $n  = d$. So assume $n > d$. Let $P \in \Assh(A)$, By \ref{bella} we get that $\mu_{n-d}(P, M) \leq 1$.
If $\mu_{n-d}(P, M) = 0$ then $\injdim M_P  = \injdim A_P^r < \infty$. So $A_P$ is Gorenstein. We assert that $\mu_{n-d}(P, M) = 1$ is not possible. Suppose if possible that $\mu_{n-d}(P, M) = 1$.
 Set $D = E_{A_P}(\kappa(P))$.
We have an exact sequence
$$ 0 \rt A_P^r \rt D^{s_0} \rt \cdots D^{s_{n-d-1}} \xrightarrow{f} D \xrightarrow{g} D^{s_{n-d + 1}} \rt \cdots. $$
Set $C = \image f$. Then $\ell_{A_P}(C_P) = t\ell_{A_P}(A_P)$ for some $t \geq 0$. It follows that $C = 0$ or $ \image(g) = 0$. So $\injdim A_P < \infty$ and so $A_P$ is Gorenstein.
This implies that  $\mu_{n-d}(P, M) = 0$. So our assertion holds. So $A_P$ is Gorenstein for all primes $P \in \Assh(A)$.

By our above argument the result holds when $d = 0$. So assume that $d > 0$. Note we can assume that $A$ is complete.
 The only thing we need to show that if $\wh{A}$ is \CM \ and $\wh{A}_Q$ is Gorenstein for all minimal primes $Q$ then $A_P$ is Gorenstein for all minimal primes $P$ of $A$. But this is already proved in proof of Theorem \ref{rank-non-cm}.

  Let $\Gamma_\m(-)$ be the $\m$-torsion functor. Let $\Gamma_\m(\I) = \G$.
We have
$$ \G \colon 0 \rt E^{r_0} \rt \cdots E^{r_{d-1}} \xrightarrow {\partial_{d-1} } E^{r_d} \xrightarrow{\partial_d} E^{r_{d+1}} \rt \cdots.$$
Set $B = \image \partial_{d-1}$, $Z = \ker \partial_{d}$.
 We have an  exact sequences $0  \rt B \rt Z \rt H^d_\m(M) \rt 0.$
Dualizing we get an exact sequences $0 \rt H^d_\m(M)^\vee \rt Z^\vee \rt B^\vee \rt 0$. Let $P \in \Assh(A)$.  By \ref{bijlee},  $B^\vee_P$ is free $A_P$-module of rank $\sum_{i =0}^{d-1}(-1)^i\mu_{d-1-i}(\m, M)$.
As $A$ is unmixed we get $\Assh(A) = \Ass(A) = \Min(A)$. Thus $B^\vee$ has a rank.

Claim: $H^d_\m(M)^\vee$ has rank $r$.

Proof of Claim: As $A$ is complete there exists a complete Gorenstein ring $R$ mapping onto $A$ with $\dim R = \dim A = d$. We have by local duality $H^d_\m(M)^\vee = \Hom_R(M, R)$. Let $P \in \Ass(A) = \Assh(A) = \Min(A)$. Let $Q$ be the pre-image of $P$ in $R$. We note that $Q$ is a minimal prime of $R$ and induces a surjection $R_Q \rt A_P$. We have
$$H^d(M)^\vee_P = \Hom_{R_Q}(M_P,R_Q) \cong \Hom_{R_Q}(A_P^r, R_Q) \cong A_P^r.$$
The last isomorphism holds as $A_P$ is Gorenstein. Thus $H^d_\m(M)^\vee$ has rank $r$.

It follows that $Z^\vee$ has a rank. We have an minimal exact sequence
$$ \cdots\rt F^{r_r} \rt \cdots \xrightarrow{f} A^{r_n} \xrightarrow{g} A^{r_{n-1}} \cdots \rt A^{r_d} \rt Z^\vee \rt 0.$$
Here by assumption $r_n \leq 1$. So we have exact sequence $0 \rt \image(f) \rt A^{r_n} \rt \image(g) \rt 0$. Both $\image(f)$ and $\image(g)$ have a rank  as $Z^\vee$ has a rank. So $\rank(\image f)$ or $\rank(\image g)$ is zero.
Both $\image (f)$ and $\image(g) $ are submodules of a free $A$-module. As $A$ is unmixed, it follows that $\image(f) $ or $\image(g)$ is zero.  It follows that $\mu_j(\m, M) = 0$ for $j > n$.
So  $\injdim M < \infty$. Thus $\mu_j(\m, M) = 0$ for $j > d$. By Bass conjecture $A$ is \CM, see \ref{bc}.
\end{proof}

We show that the assertion of the theorem need not hold if $M$ does not have a rank.
\begin{example}\label{alive}
  Let $(A,\m)$ be a Gorenstein Artin local ring which is not a field. Let $k = A/\m$. Let $\Syz_n(k)$ denote the $n^{th}$ syzygy of $k$. Then $\mu_n(\m, \Syz_n(k)) = 1$, but $\Syz_n(k)$ does not have finite injective dimension.
\end{example}

Next we give an example which shows that $M$ need not be \CM \ even if it satisfies the assumption of the theorem.
\begin{example}\label{sty}
  Let $(A,\m)$ be a one-dimensional Gorenstein  local ring. Let $x \in \m$ be a non-zero divisor of $A$. Set $M = A \oplus A/(x)$, Then $\rank M = 1$. Also $\mu_2(\m, M) = 0$. But $M$ is not \CM.
\end{example}
We now give
\begin{proof}[Proof of Proposition \ref{nuv}]
We note that $A$ is \CM \ and $\injdim M < \infty$ by Theorem \ref{my-thm}. As $A$ is a quotient of an Gorenstein local ring,  $A$ has a canonical module $\omega$. Since $\injdim M$ is finite, by an exercise problem \cite[3.3.28]{BH}, $M$ has a finite minimal $\omega$ resolution, i.e., an exact sequence
\[
0 \rt \omega^{s_p} \xrightarrow{\psi_p} \omega^{s_{p-1}}  \rt \cdots \rt \omega^{s_1} \xrightarrow{\psi_1} \omega^{s_0} \rt M \rt 0,
\]
where $\image \psi_i \subseteq \m \omega^{s_{i-1}}$ for all $i$ and $s_i = \mu_{d-i}(\m, M)$ for all $i$. It follows that $\nu(M) = s_0\type(M) \geq r \type(M)$ by Theorem \ref{rank-non-cm}. Note equality holds if and only if $\mu_d(\m, M) = r = \rank M$. Then again by Theorem \ref{rank-non-cm} we get that $M$ is maximal \CM. As $\injdim M < \infty$ it follows from the same excercise problem \cite[3.3.28]{BH} we get $M \cong \omega^s$ for some $s$. As $\mu_d(\m, M) = r$ we get $s = r$. The result follows.
\end{proof}

\end{document}